\newtheorem{theorem}{Theorem}
\newtheorem{corollary}[theorem]{Corollary}
\newtheorem{definition}[theorem]{Definition}
\begin{document}

\title{On Convolved Generalized Fibonacci and Lucas Polynomials}

\author{José L. Ramírez$^{1}$\\
$^1$Instituto de Matemáticas y sus Aplicaciones.  \\ 110221, Universidad Sergio Arboleda,\\   Colombia\\
josel.ramirez@ima.usergioarboleda.edu.co\\[2pt]
}

\maketitle

\begin{abstract}
We define the convolved $h(x)$-Fibonacci polynomials as an extension of the classical convolved Fibonacci numbers.  Then we give some combinatorial formulas involving the $h(x)$-Fibonacci and $h(x)$-Lucas polynomials. Moreover we obtain the convolved  $h(x)$-Fibonacci polynomials from a family of  Hessenberg matrices.

{\bf AMS Subject Classification:} 11B39, 11B83.

{\bf Key Words and Phrases:} Convolved $h(x)$-Fibonacci polynomials, $h(x)$-Fibonacci polynomials, $h(x)$-Lucas polynomials, Hessenberg matrices.
\end{abstract}

\section{Introduction}
Fibonacci numbers and their generalizations have many
interesting properties and applications to almost every fields of science and art  (e.g., see \cite{koshy}). The Fibonacci numbers $F_n$ are the terms of the sequence ${0, 1, 1, 2, 3, 5, . . .}$, wherein each term is the sum of the two previous terms, beginning with the values $F_0 = 0$ and $F_1 = 1$.\\
Besides the usual Fibonacci numbers many kinds of generalizations of these
numbers have been presented in the literature. In particular, a generalization is the $k$-Fibonacci Numbers. \\
For any positive real number $k$, the $k$-Fibonacci sequence, say $\{F_{k,n}\}_{n\in \mathbb{N}}$, is defined recurrently by
\begin{align}
F_{k,0}=0, \  \ F_{k,1}=1,  \   \  F_{k,n+1}=kF_{k,n}+F_{k,n-1},  \ n\geqslant 1 \label{eq1}
\end{align}

In \cite{falcon1},  $k$-Fibonacci numbers  were found by studying the recursive application of two geometrical transformations used in the  four-triangle longest-edge (4TLE) partition.   These numbers have been studied in several papers; see  \cite{ falcon1, CEN,  falcon4, falcon5,  falcon2,    Asalas}.\\

The convolved Fibonacci numbers $F_j^{(r)}$ are defined by $$(1-x-x^2)^{-r}=\sum_{j=0}^{\infty}F_{j+1}^{(r)}x^j,  \ \ \ r\in \mathbb{Z}^+.$$  If $r=1$ we have classical Fibonacci numbers.These numbers have been studied in several papers; see  \cite{ com2,  com4, com3}. Convolved $k$-Fibonacci numbers have been studied in \cite{RAM}.

Large classes of polynomials can be defined by Fibonacci-like recurrence relation and yield Fibonacci numbers \cite{koshy}.  Such polynomials, called Fibonacci polynomials,
were studied in 1883 by the Belgian mathematician Eugene Charles Catalan and the German mathematician E. Jacobsthal.
The polynomials $F_n(x)$ studied by Catalan are defined by the recurrence relation
\begin{align}
F_{0}(x)=0, \  \ F_{1}(x)=1,  \   \  F_{n+1}(x)=xF_{n}(x)+F_{n-1}(x),  \ n\geqslant 1. \label{eq11}
\end{align}
The Fibonacci polynomials studied by Jacobsthal are defined by
\begin{align}
J_{0}(x)=1, \  \ J_{1}(x)=1,  \   \  J_{n+1}(x)=J_{n}(x)+xJ_{n-1}(x),  \ n\geqslant 1. \label{eq12}
\end{align}
The Lucas polynomials $L_n(x)$, originally studied in 1970 by Bicknell, are defined by
\begin{align}
L_{0}(x)=2, \  \ L_{1}(x)=x,  \   \  L_{n+1}(x)=xL_{n}(x)+L_{n-1}(x),  \ n\geqslant 1. \label{eq13}
\end{align}
In \cite{ayse}, the authors introduced the $h(x)$-Fibonacci polynomials. That generalize 
Catalan's Fibonacci polynomials $F_n(x)$ and the $k$-Fibonacci numbers $F_{k,n}$.   In this paper, we introduce the convolved $h(x)$-Fibonacci polynomials and we obtain new identities.

\section{Some Properties of $h(x)$-Fibonacci Polynomials and $h(x)$-Lucas Polynomials}

\begin{definition}
Let $h(x)$ be a polynomial with real coefficients. The $h(x)$-Fibonacci polynomials $\{F_{h,n}(x)\}_{n\in \mathbb{N}}$ are defined by the recurrence relation
\begin{align}
F_{h,0}(x)=0, \  \ F_{h,1}(x)=1,  \   \  F_{h,n+1}(x)=h(x)F_{h,n}(x)+F_{h,n-1}(x),  \ n\geqslant 1. \label{eq22}
\end{align}
\end{definition}

For $h(x)=x$ we obtain Catalan's Fibonacci polynomials, and for $h(x)=k$ we obtain $k$-Fibonacci numbers.  For $k=1$ and $k=2$ we obtain the usual Fibonacci numbers and the Pell
numbers.

The characteristic equation associated with the recurrence relation (\ref{eq22}) is $v^2=h(x)v+1$.  The roots of  this equation are
\begin{align*}
r_1(x)=\frac{h(x)+\sqrt{h(x)^2+4}}{2},    \hspace{1cm}   r_2(x)=\frac{h(x)-\sqrt{h(x)^2+4}}{2}.
\end{align*}
Then we have the following basic identities:
\begin{align} \label{iden}
r_1(x)+r_2(x)=h(x),\  \ \hspace{0.5cm}  r_1(x)-r_2(x)=\sqrt{h(x)^2+4},\  \ \hspace{0.5cm} \  \  r_1(x)r_2(x)=-1.
\end{align}
Some of the properties that the $h(x)$-Fibonacci polynomials verify are summarized bellow (see \cite{ayse} for the proofs).

\begin{itemize}
\item Binet formula: $F_{h,n}(x)=\frac{r_1(x)^n-r_2(x)^n}{r_1(x)-r_2(x)}$.
\item Combinatorial formula: $F_{h,n}(x)=\sum_{i=0}^{\lfloor (n-1)/2 \rfloor}\binom{n-1-i}{i}h^{n-1-2i}(x)$.
\item Generating function: $g_f(t)=\frac{t}{1-h(x)t-t^2}$.
\end{itemize}

\begin{definition}
Let $h(x)$ be a polynomial with real coefficients. The $h(x)$-Lucas polynomials $\{L_{h,n}(x)\}_{n\in \mathbb{N}}$ are defined by the recurrence relation
\begin{align}
L_{h,0}(x)=2, \  \ L_{h,1}(x)=h(x),  \   \  L_{h,n+1}(x)=h(x)L_{h,n}(x)+L_{h,n-1}(x),  \ n\geqslant 1. \label{eq33}
\end{align}
\end{definition}
For  $h(x)=x$ we obtain the Lucas polynomials, and for $h(x)=k$ we have the $k$-Lucas  numbers. For $k=1$ we obtain the usual  Lucas  numbers. 

Some properties that the $h(x)$-Lucas numbers verify are summarized bellow (see \cite{ayse} for the proofs).
\begin{itemize}
\item Binet formula: $L_{h,n}(x)=r_1(x)^n+r_2(x)^n$.
\item Relation with $h(x)$-Fibonacci polynomials: $L_{h,n}(x)=F_{h,n-1}(x)+F_{h,n+1}(x), \ n \geqslant 1$.
\end{itemize}

\section{Convolved $h(x)$-Fibonacci Polynomials}
\begin{definition}
The convolved $h(x)$-Fibonacci polynomials $F_{h,j}^{(r)}(x)$ are defined by $$g_h^{(r)}(t)=(1-h(x)t-t^2)^{-r}=\sum_{j=0}^{\infty}F_{h,j+1}^{(r)}(x)t^j,  \ \ r\in \mathbb{Z}^+.$$ \end{definition}
 Note that
\begin{align}
F_{h, m+1}^{(r)}(x)=\sum_{j_1+j_2+\cdots +j_r=m}F_{h, j_1+1}(x)F_{h, j_2+1}(x)\cdots F_{h, j_r+1}(x).
\end{align}
Moreover, using a result of Gould \cite[p. 699]{GOU} on Humbert polynomials (with $n = j, m = 2,
x = h(x)/2, y = -1, p = -r$ and $C = 1$), we have
\begin{align}\label{hum}
F_{h,j+1}^{(r)}(x)=\sum_{l=0}^{\lfloor j/2 \rfloor}\binom{j+r-l-1}{j-l}\binom{j-l}{l}h(x)^{j-2l}.  
\end{align}

If $r=1$ we obtain the combinatorial formula of $h(x)$-Fibonacci polynomials. In Table\ref{tabla1} some polynomials of convolved $h(x)$-Fibonacci polynomials are provided. The purpose of this paper is to investigate the properties of these polynomials.

\begin{table}[h]
\centering
\begin{tabular}{|>{$}c<{$}|>{$}c<{$}| >{$}c<{$}|>{$}c<{$}|}\hline
n & F_{h, n}^{(1)}(x) & F_{h, n}^{(2)}(x) &  F_{h, n}^{(3)}(x)  \\ \hline
0 & 0 & 0 &0\\
1& 1 & 1 & 1 \\
 2&h & 2 h & 3 h \\
 3&h^2+1 & 3 h^2+2 & 6 h^2+3 \\
 4&h^3+2 h & 4 h^3+6 h & 10 h^3+12 h \\
 5&h^4+3 h^2+1 & 5 h^4+12 h^2+3 & 15 h^4+30 h^2+6 \\
 6&h^5+4 h^3+3 h & 6 h^5+20 h^3+12 h & 21 h^5+60 h^3+30 h \\
 7&h^6+5 h^4+6 h^2+1 & 7 h^6+30 h^4+30 h^2+4 & 28 h^6+105 h^4+90 h^2+10 \\ 
8& h^7+6 h^5+10 h^3+4 h & 8 h^7+42 h^5+60 h^3+20 h & 36 h^7+168 h^5+210 h^3+60 h \\ \hline
 \end{tabular}
\caption{$F_{h, n}^{(r)}(x)$, with $r=1, 2, 3$} \label{tabla1}
\end{table}

\begin{theorem}
The following identities hold:
\begin{enumerate}
\item $F_{h,2}^{(r)}(x)=rh(x)$.
\item $F_{h,n}^{(r)}(x)=F_{h,n}^{(r-1)}(x)+h(x)F_{h,n-1}^{(r)}(x)+F_{h,n-2}^{(r)}(x), \  n\geqslant 2$.
\item $nF_{h,n+1}^{(r)}(x)=r(h(x)F_{h,n}^{(r+1)}(x)+2F_{h,n-1}^{(r+1)}(x)),  \  n\geqslant 1$.
\end{enumerate}
\end{theorem}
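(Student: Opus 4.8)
The plan is to prove all three parts by a uniform generating-function argument, working with
$$g_h^{(r)}(t)=(1-h(x)t-t^2)^{-r}=\sum_{j=0}^{\infty}F_{h,j+1}^{(r)}(x)t^j$$
and extracting or comparing coefficients of powers of $t$. Throughout I adopt the convention $F_{h,0}^{(r)}(x)=0$, which is consistent with Table~\ref{tabla1} and with the fact that $g_h^{(r)}(t)$ carries no negative powers of $t$; this convention is what makes the boundary cases of parts (2) and (3) work.

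For part (1), I would simply read off the coefficient of $t^1$. Expanding by the generalized binomial theorem, $(1-h(x)t-t^2)^{-r}=\sum_{k\geqslant 0}\binom{r+k-1}{k}(h(x)t+t^2)^k$, only the $k=1$ term contributes to the linear part, giving $F_{h,2}^{(r)}(x)=\binom{r}{1}h(x)=rh(x)$. (Equivalently this is the Humbert formula (\ref{hum}) evaluated at $j=1$.)

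For part (2), the key observation is the factorization $g_h^{(r)}(t)=(1-h(x)t-t^2)^{-1}g_h^{(r-1)}(t)$, or equivalently $(1-h(x)t-t^2)\,g_h^{(r)}(t)=g_h^{(r-1)}(t)$. Expanding the left-hand side as $g_h^{(r)}(t)-h(x)t\,g_h^{(r)}(t)-t^2 g_h^{(r)}(t)$ and comparing the coefficient of $t^{n-1}$ on both sides yields $F_{h,n}^{(r)}(x)-h(x)F_{h,n-1}^{(r)}(x)-F_{h,n-2}^{(r)}(x)=F_{h,n}^{(r-1)}(x)$, which is the stated identity after transposing terms. For part (3), I would differentiate in $t$: since $\frac{d}{dt}(1-h(x)t-t^2)^{-r}=r(h(x)+2t)(1-h(x)t-t^2)^{-r-1}$, we get $\frac{d}{dt}g_h^{(r)}(t)=r(h(x)+2t)\,g_h^{(r+1)}(t)$. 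The left side equals $\sum_{j\geqslant 0}(j+1)F_{h,j+2}^{(r)}(x)t^j$, so matching the coefficient of $t^{n-1}$ gives $nF_{h,n+1}^{(r)}(x)=rh(x)F_{h,n}^{(r+1)}(x)+2rF_{h,n-1}^{(r+1)}(x)$, which factors into the claimed form.

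None of the three steps presents a deep obstacle; the whole proof is routine once the right generating-function identity is isolated in each case (the product factorization for (2), the derivative for (3)). The only point requiring genuine care is the index bookkeeping in the coefficient comparisons, and in particular verifying that the extremal cases $n=2$ in (2) and $n=1$ in (3) hold — these rely precisely on the convention $F_{h,0}^{(r)}(x)=0$, so that the lowest-order terms vanish correctly.
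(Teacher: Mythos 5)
Your proposal is correct and follows essentially the same route as the paper: the Humbert/binomial expansion for part (1), the factorization $(1-h(x)t-t^2)\,g_h^{(r)}(t)=g_h^{(r-1)}(t)$ for part (2), and differentiation of the generating function for part (3). Your explicit attention to the convention $F_{h,0}^{(r)}(x)=0$ and the boundary cases is a small but welcome addition of care beyond what the paper writes out.
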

\begin{proof}
\begin{enumerate}
\item Taking  $j=1$ in (\ref{hum}), we obtain
\begin{align*}
F_{h, 2}^{(r)}(x)=\binom{r}{1}\binom{1}{0}h(x)=rh(x).
\end{align*}
\item This identity  is obtained from observing that
\begin{align*}
\sum_{j=0}^{\infty}F_{h,j+1}^{(r)}(x)t^j=(h(x)t+t^2)\sum_{j=0}^{\infty}F_{h,j+1}^{(r)}(x)t^j + \sum_{j=0}^{\infty}F_{h,j+1}^{(r-1)}(x)t^j.
\end{align*}
\item Taking the first derivative of  $g_h^{(r)}(t)=(1-h(x)t-t^2)^{-r}$, we obtain
\begin{align*}
(g_h^{(r)}(t))'&=\sum_{j=1}^{\infty}F_{h,j+1}^{(r)}(x)jt^{j-1}=r\left(\frac{1}{1-h(x)t-t^2}\right)^{r-1}\left(\frac{h(x)+2t}{(1-h(x)t-t^2)^2}\right)\\
&=r(h(x)+2t)g_h^{(r+1)}(t)
\end{align*}
Therefore the identity is clear.
\end{enumerate}
\end{proof}
In the next theorem we show that the convolved $h(x)$-Fibonacci polynomials can be expressed in terms of $h(x)$-Fibonacci and $h(x)$-Lucas polynomials. This theorem generalizes Theorem 4 of \cite{com3} and Theorem 4 of \cite{RAM}.
\begin{theorem} Let $j \geqslant 0$ and $r\geqslant 1$. We have
\begin{align}
F_{h,j+1}^{(r)}(x)=&\sum_{\substack{l=0\\ r+l\cong0\mod 2}}^{r-1}\binom{r+l-1}{l}\binom{r-l+j-1}{j}\frac{1}{(h(x)^2+4)^{(r+l)/2}}L_{h,r+j-l}^{(r)}(x) \label{fibolucas} \\
&+ \sum_{\substack{l=0\\ r+l\cong 1 \mod 2}}^{r-1}\binom{r+l-1}{l}\binom{r-l+j-1}{j}\frac{1}{(h(x)^2+4)^{(r+l-1)/2}}F_{h,r+j-l}^{(r)}(x) \notag
\end{align}
\end{theorem}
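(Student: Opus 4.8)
The plan is to reduce everything to the factorization of the generating function's denominator. Using the identities (\ref{iden}), namely $r_1(x)+r_2(x)=h(x)$ and $r_1(x)r_2(x)=-1$, one checks at once that $1-h(x)t-t^2=(1-r_1(x)t)(1-r_2(x)t)$, so that
$$g_h^{(r)}(t)=(1-r_1(x)t)^{-r}(1-r_2(x)t)^{-r}.$$
The task then becomes extracting the coefficient of $t^j$ from this product, and the natural device for a product of two $r$-fold poles is partial fractions.

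First I would decompose $g_h^{(r)}(t)$ with respect to its two poles $t=1/r_1(x)$ and $t=1/r_2(x)$, each of order $r$:
$$g_h^{(r)}(t)=\sum_{k=1}^{r}\frac{a_k}{(1-r_1(x)t)^k}+\sum_{k=1}^{r}\frac{b_k}{(1-r_2(x)t)^k}.$$
To pin down the $a_k$ I substitute $u=1-r_1(x)t$ and expand the surviving factor $(1-r_2(x)t)^{-r}$ as a binomial series in $u$; invoking $r_1(x)r_2(x)=-1$ and $r_1(x)-r_2(x)=\sqrt{h(x)^2+4}$, and writing $k=r-l$, this produces
$$a_{r-l}=(-1)^l\binom{r+l-1}{l}\frac{r_1(x)^r r_2(x)^l}{(r_1(x)-r_2(x))^{r+l}},$$
with $b_{r-l}$ obtained by swapping $r_1(x)\leftrightarrow r_2(x)$. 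This explicit evaluation of the partial-fraction coefficients is the one genuinely nontrivial step; everything after it is organized bookkeeping.

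Next I would expand each block through $(1-r_i(x)t)^{-(r-l)}=\sum_{j\geqslant 0}\binom{r-l+j-1}{j}r_i(x)^j t^j$ and read off the coefficient of $t^j$. The powers of the roots collapse via $r_1(x)r_2(x)=-1$: the $a$-contribution at a fixed $l$ reduces to $\binom{r+l-1}{l}\binom{r-l+j-1}{j}r_1(x)^{r+j-l}/(r_1(x)-r_2(x))^{r+l}$, and the $b$-contribution to the same expression with $r_1(x)$ replaced by $r_2(x)$ and $(r_1(x)-r_2(x))$ by $(r_2(x)-r_1(x))$. Pulling the sign $(-1)^{r+l}$ out of $(r_2(x)-r_1(x))^{r+l}$ and summing the two leaves, for each $l$, the factor $r_1(x)^{r+j-l}+(-1)^{r+l}r_2(x)^{r+j-l}$ over $(r_1(x)-r_2(x))^{r+l}$.

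Finally I would split on the parity of $r+l$. When $r+l$ is even this numerator is $r_1(x)^{r+j-l}+r_2(x)^{r+j-l}=L_{h,r+j-l}(x)$ by the Lucas Binet formula, while $(r_1(x)-r_2(x))^{r+l}=(h(x)^2+4)^{(r+l)/2}$; when $r+l$ is odd it is $r_1(x)^{r+j-l}-r_2(x)^{r+j-l}=(r_1(x)-r_2(x))F_{h,r+j-l}(x)$, one factor $r_1(x)-r_2(x)$ cancels, and the surviving power is $(h(x)^2+4)^{(r+l-1)/2}$. Letting $l$ range from $0$ to $r-1$ (i.e. $k=r-l$ from $r$ down to $1$) then reassembles precisely the two sums of (\ref{fibolucas}). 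The main obstacle is the clean computation of the coefficients $a_k,b_k$; once those are secured, the parity split together with the two Binet formulas finishes the argument essentially mechanically.
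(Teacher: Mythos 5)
Your proposal is correct and follows essentially the same route as the paper's proof: a partial fraction decomposition of $(1-r_1(x)t)^{-r}(1-r_2(x)t)^{-r}$ with the explicit coefficients $\binom{-r}{l}\alpha^r\beta^l/(\alpha-\beta)^{r+l}$, extraction of the coefficient of $t^j$, simplification via $r_1(x)r_2(x)=-1$, and a final parity split on $r+l$ using the two Binet formulas. The only (cosmetic) difference is that you derive the partial-fraction coefficients by the substitution $u=1-r_1(x)t$ and work with positive binomials throughout, whereas the paper quotes the decomposition and converts $\binom{-r}{l}$, $\binom{l-r}{j}$ at the end.
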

\begin{proof}
Given $\alpha, \beta\in \mathbb{C}$, such that $\alpha\beta\neq0$ and $\alpha\neq\beta$. Then we have the following partial fraction decomposition:
\begin{align*}
(1-\alpha z)^{-r}(1-\beta z)^{-r}=& \sum_{l=0}^{r-1}\binom{-r}{l}\frac{\alpha^r\beta^l}{(\alpha-\beta)^{r+l}}(1-\alpha z)^{l-r}\\
& +  \sum_{l=0}^{r-1}\binom{-r}{l}\frac{\beta^r\alpha^l}{(\beta-\alpha)^{r+l}}(1-\beta z)^{l-r},
\end{align*}
where $\binom{t}{0}=1$ and $\binom{t}{l}=\frac{t(t-1)\cdots (t-l+1)}{l!}$ with $t\in \mathbb{R}$. Using the Taylor expansion
$$(1-z)^t=\sum_{j=0}^{\infty}(-1)^j\binom{t}{j}z^j$$
Then $(1-\alpha z)^{-r}(1-\beta z)^{-r}=\sum_{j=0}^{\infty}\gamma(j)z^j$, where
\begin{align*}
\gamma(j)=&\sum_{l=0}^{r-1}\binom{-r}{l}\frac{\alpha^r\beta^l}{(\alpha-\beta)^{r+l}}(-1)^j\binom{l-r}{j}\alpha^{j}\\
&+\sum_{l=0}^{r-1}\binom{-r}{l}\frac{\beta^r\alpha^l}{(\beta-\alpha)^{r+l}}(-1)^j\binom{l-r}{j}\beta^{j}
\end{align*}
Note that $1-h(x)z-z^2=(1-r_1(x)z)(1-r_2(x)z)$. On substituting these values of $\alpha=r_1(x)$ and $\beta=r_2(x)$ and using the identities  (\ref{iden}), we obtain
\begin{align*}
F_{h,j+1}^{(r)}(x)=&\sum_{l=0}^{r-1}\binom{-r}{l}\frac{(-1)^l\alpha^{r-l}}{(h(x)^2+4)^{(r+l)/2}}(-1)^j\binom{l-r}{j}\alpha^{j}\\
&+\sum_{l=0}^{r-1}\binom{-r}{l}\frac{(-1)^l\beta^{r-l}}{(h(x)^2-4)^{(r+l)/2}}(-1)^j\binom{l-r}{j}\beta^{j}\\
&=\sum_{l=0}^{r-1}(-1)^l\binom{-r}{l}(-1)^j\binom{l-r}{j}\frac{1}{(h(x)^2+4)^{(r+l)/2}}\left(\alpha^{r+j-l}+(-1)^{r+l}\beta^{r+j-l}\right)
\end{align*}
Since that $(-1)^l\binom{-r}{l}=\binom{r+l-1}{l}$ and $(-1)^j\binom{l-r}{j}=\binom{r-l+j-1}{j}$, then
\begin{align*}
F_{h,j+1}^{(r)}(x)=\binom{r+l-1}{l}\binom{r-l+j-1}{j}\frac{1}{(h(x)^2+4)^{(r+l)/2}}\left(\alpha^{r+j-l}+(-1)^{r+l}\beta^{r+j-l}\right)
\end{align*}
From the above equality and Binet formula, we obtain the equation (\ref{fibolucas}).
\end{proof}
\section{Hessenberg Matrices and Convolved $h(x)$-Fibonacci Polynomials}

An upper Hessenberg matrix, $A_n$, is an $n\times n$  matrix, where $a_{i,j} = 0$ whenever
$i > j+1$ and $a_{j+1,j} \neq 0$ for some $j$. That is, all entries bellow the superdiagonal
are 0 but the matrix is not upper triangular:
\begin{align}
A_n=\begin{pmatrix}
a_{1,1}  & a_{1,2}  & a_{1,3}  &  \cdots  & a_{1,n-1}  & a_{1,n}  \\
a_{2,1}  & a_{2,2}  & a_{2,3}  &  \cdots  & a_{2,n-1}  & a_{2,n}  \\
0  & a_{3,2}  & a_{3,3}  &  \cdots  & a_{3,n-1}  & a_{3,n}  \\
\vdots  & \vdots  & \vdots &  \cdots  & \vdots & \vdots  \\
0  & 0  & 0  &  \cdots  & a_{n-1,n-1}  & a_{n-1,n}  \\
0  & 0  & 0  &  \cdots  & a_{n,n-1}  & a_{n,n}
\end{pmatrix}
\end{align}
We consider a type of upper Hessenberg matrix whose determinants are $h(x)$-Fibonacci numbers. Some results about Fibonacci numbers and Hessenberg can be found in \cite{HES}. The following known result about upper Hessenberg matrices will be used.
\begin{theorem}\label{thes}
Let  $a_1$, $p_{i,j}  (i\leqslant j)$ be arbitrary elements of a commutative ring $R$, and let the sequence $a_1, a_2,\dots$ be defined by:
\begin{align*}
a_{n+1}=\sum_{i=1}^{n}p_{i,n}a_i, \ \ (n=1, 2, \dots).
\end{align*}
If
\begin{align*}
A_n=\begin{pmatrix}
p_{1,1}  & p_{1,2}  & p_{1,3}  &  \cdots  & p_{1,n-1}  & p_{1,n}  \\
-1  & p_{2,2}  & p_{2,3}  &  \cdots  & p_{2,n-1}  & p_{2,n}  \\
0  & -1  & p_{3,3}  &  \cdots  & p_{3,n-1}  & p_{3,n}  \\
\vdots  & \vdots  & \vdots &  \cdots  & \vdots & \vdots  \\
0  & 0  & 0  &  \cdots  & a_{n-1,n-1}  & a_{n-1,n}  \\
0  & 0  & 0  &  \cdots  & -1  & a_{n,n}
\end{pmatrix}
\end{align*}
then
\begin{align}
a_{n+1}=a_1\det A_n.
\end{align}
\end{theorem}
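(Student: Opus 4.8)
The plan is to prove the statement by strong induction on $n$, after reducing it to a recurrence satisfied by the determinants themselves. Writing $D_n = \det A_n$ and adopting the convention $D_0 = 1$ for the empty determinant, the whole theorem follows once we establish the companion identity
\begin{align*}
D_n = \sum_{i=1}^{n} p_{i,n} D_{i-1}, \qquad n \geqslant 1,
\end{align*}
since this mirrors exactly the recurrence defining $a_{n+1}$. Indeed, assuming inductively that $a_i = a_1 D_{i-1}$ for all $i \leqslant n$, one substitutes into $a_{n+1} = \sum_{i=1}^n p_{i,n} a_i$ and factors out $a_1$ to obtain $a_{n+1} = a_1 \sum_{i=1}^n p_{i,n} D_{i-1} = a_1 D_n$; the base case $a_1 = a_1 D_0$ is trivial. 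So the real content lies in the determinant recurrence.

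To prove that recurrence, I would expand $\det A_n$ by cofactors along the last column, whose entries are $p_{1,n}, p_{2,n}, \dots, p_{n,n}$:
\begin{align*}
D_n = \sum_{i=1}^{n} (-1)^{i+n} p_{i,n} M_{i,n},
\end{align*}
where $M_{i,n}$ is the minor obtained by deleting row $i$ and column $n$. The key step is to evaluate $M_{i,n}$. Deleting column $n$ leaves the subdiagonal of $-1$'s intact, and I claim that after also deleting row $i$ the resulting $(n-1)\times(n-1)$ matrix is block upper triangular: its top-left $(i-1)\times(i-1)$ block is precisely the leading principal submatrix $A_{i-1}$, its bottom-left block vanishes because every row below the $i$-th has its leftmost nonzero entry (the $-1$) at or beyond column $i$, and its bottom-right $(n-i)\times(n-i)$ block is upper triangular with $-1$ along the diagonal. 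Hence $M_{i,n} = (-1)^{n-i} D_{i-1}$.

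Substituting this value and using the sign cancellation $(-1)^{i+n}(-1)^{n-i} = (-1)^{2n} = 1$ collapses the expansion to $D_n = \sum_{i=1}^n p_{i,n} D_{i-1}$, which is exactly what is needed. The main obstacle is the bookkeeping in the minor computation: one must check carefully that deleting row $i$ and column $n$ produces the claimed block structure and, in particular, that the bottom-right block is genuinely the $(n-i)\times(n-i)$ upper triangular matrix with $-1$'s on the diagonal, so that its determinant equals $(-1)^{n-i}$ and cancels the cofactor sign. Once the block-triangular form and its determinant are pinned down, both the determinant recurrence and the concluding induction are immediate.
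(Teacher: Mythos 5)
Your proof is correct and complete. Note, however, that the paper itself offers no proof of this statement: it is quoted verbatim as a ``known result'' imported from Janji\'c's paper on Hessenberg matrices and integer sequences, so there is no in-paper argument to compare against. Your route --- cofactor expansion of $\det A_n$ along the last column, the observation that deleting row $i$ and column $n$ yields a block upper triangular matrix with top-left block $A_{i-1}$ and bottom-right block upper triangular with $-1$'s on the diagonal, hence $M_{i,n}=(-1)^{n-i}D_{i-1}$ and the recurrence $D_n=\sum_{i=1}^{n}p_{i,n}D_{i-1}$, followed by induction --- is the standard proof of this lemma, and every step (including the sign cancellation $(-1)^{i+n}(-1)^{n-i}=1$ and the validity of cofactor expansion over an arbitrary commutative ring) checks out.
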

In particular, if
\begin{align}
F_n^{(h)}=\begin{pmatrix}
h(x)  & 1  & 0  &  \cdots  & 0  & 0  \\
-1  & h(x)  & 1  &  \cdots  & 0  & 0  \\
0  & -1  & h(x)  &  \cdots  & 0  & 0  \\
\vdots  & \vdots  & \vdots &  \cdots  & \vdots & \vdots  \\
0  & 0  & 0  &  \cdots  & h(x) & 1  \\
0  & 0  & 0  &  \cdots  & -1  & h(x)
\end{pmatrix}
\end{align}
then  from Theorem \ref{thes} we have that
\begin{align}
\det F_n^{(h)}=F_{h,n+1}(x),  \ \ \ (n=1, 2, \dots).
\end{align}
It is clear that the principal minor $M^{(h)}(i)$ of $F_n^{(h)}$ is equal to $F_{h,i}(x)F_{h,n-i+1}(x)$. It follows that the principal minor  $M^{(h)}(i_1,i_2,\dots,i_l)$ of the matrix $F_n^{(h)}$ is obtained by deleting rows and columns with indices $1\leqslant i_1 < i_2 < \cdots < i_l\leqslant n$:
\begin{align}
M^{(h)}(i_1,i_2,\dots,i_l)=F_{h,i_1}(x)F_{h,i_2-i_1}(x)\cdots F_{h,i_l-i_{l-1}}(x)F_{h,n-i_l+1}(x).
\end{align}
Then we have the following theorem.
\begin{theorem}
Let $S_{n-l}^{(h)}, (l=0, 1, 2, \dots, n-1)$ be the sum of all principal minors of $F_n^{(h)}$ or order $n-l$. Then
\begin{align}
S_{n-l}^{(h)}=\sum_{j_1+j_2+\cdots + j_{l+1}=n-l}F_{h,j_1+1}(x)F_{h,j_2+1}(x)\cdots F_{h,j_{l+1}+1}(x)=F_{h,n-l+1}^{(l+1)}(x).
\end{align}
\end{theorem}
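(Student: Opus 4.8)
The plan is to combine the explicit block-factorisation of the principal minors $M^{(h)}(i_1,\dots,i_l)$ recorded just before the statement with the convolution formula for $F_{h,m+1}^{(r)}(x)$, so that the whole argument reduces to a single change of summation variables. First I would note that every principal minor of $F_n^{(h)}$ of order $n-l$ is obtained by deleting $l$ rows together with the $l$ columns carrying the same indices, and hence is labelled by a strictly increasing tuple $1\leqslant i_1<i_2<\cdots<i_l\leqslant n$. Summing over all of these gives
\begin{align*}
S_{n-l}^{(h)}=\sum_{1\leqslant i_1<\cdots<i_l\leqslant n}M^{(h)}(i_1,\dots,i_l),
\end{align*}
and substituting the stated identity $M^{(h)}(i_1,\dots,i_l)=F_{h,i_1}(x)F_{h,i_2-i_1}(x)\cdots F_{h,i_l-i_{l-1}}(x)F_{h,n-i_l+1}(x)$ turns each summand into a product of $l+1$ Fibonacci factors. (The factorisation itself is what was asserted above: deleting the index $i_k$ severs the tridiagonal band, so the surviving submatrix is block diagonal and its determinant splits as a product of smaller $F^{(h)}$-determinants.)

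Next I would introduce the gap variables $j_1=i_1-1$, $j_k=i_k-i_{k-1}-1$ for $2\leqslant k\leqslant l$, and $j_{l+1}=n-i_l$. Under this substitution the successive factors become $F_{h,j_1+1}(x),F_{h,j_2+1}(x),\dots,F_{h,j_{l+1}+1}(x)$, so each summand is exactly $F_{h,j_1+1}(x)\cdots F_{h,j_{l+1}+1}(x)$. The constraints transform cleanly: the conditions $i_1\geqslant1$, the strict inequalities $i_{k-1}<i_k$, and $i_l\leqslant n$ are together equivalent to $j_k\geqslant0$ for every $k$, so the gap map is a bijection from strictly increasing $l$-tuples in $\{1,\dots,n\}$ onto the nonnegative integer solutions of $j_1+\cdots+j_{l+1}=n-l$. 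The value $n-l$ of the total appears by telescoping, since $\sum_k j_k=(i_1-1)+\sum_{k=2}^{l}(i_k-i_{k-1}-1)+(n-i_l)=n-l$. This establishes the middle equality of the theorem.

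Finally, the rightmost equality is nothing but the definition of the convolved polynomials: with $m=n-l$ and $r=l+1$, the convolution formula $F_{h,m+1}^{(r)}(x)=\sum_{j_1+\cdots+j_r=m}F_{h,j_1+1}(x)\cdots F_{h,j_r+1}(x)$ reads $\sum_{j_1+\cdots+j_{l+1}=n-l}F_{h,j_1+1}(x)\cdots F_{h,j_{l+1}+1}(x)=F_{h,n-l+1}^{(l+1)}(x)$, closing the argument. I expect the only delicate point to be the change of variables: one must verify both that the boundary conditions $i_1\geqslant1$ and $i_l\leqslant n$ generate precisely the two end gaps $j_1$ and $j_{l+1}$, and that the telescoping sum collapses to $n-l$ rather than to $n$ or $n-l+1$, since a single off-by-one slip would shift the index on the resulting convolved polynomial. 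Everything else is routine bookkeeping, the essential structural input---the block-diagonal factorisation of the minors---having already been supplied.
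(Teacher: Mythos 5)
Your proposal is correct and follows exactly the route the paper intends: it takes the block-factorisation $M^{(h)}(i_1,\dots,i_l)=F_{h,i_1}(x)F_{h,i_2-i_1}(x)\cdots F_{h,n-i_l+1}(x)$ stated just before the theorem, sums over all deletion tuples, and reindexes via the gap variables to land on the convolution formula defining $F_{h,n-l+1}^{(l+1)}(x)$. The paper leaves these steps implicit ("Then we have the following theorem"), so your write-up simply supplies the bookkeeping the author omitted, and your change of variables and the telescoping to $n-l$ check out.
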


Since the coefficients  of the characteristic polynomial of a matrix are, up to the sign, sums of principal minors of the matrix, then we have the following. 

\begin{corollary}\label{corofibo}
The convolved $h(x)$-Fibonacci polynomials $F_{h,n-l+1}^{(l+1)}(x)$ is equal, up to the sign, to the coefficient of $t^l$ in the characteristic polynomial $p_{n}(t)$ of $F_n^{(h)}$.
\end{corollary}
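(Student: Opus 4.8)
The plan is to invoke the classical identity expressing the characteristic polynomial of an $n \times n$ matrix as a signed sum of its principal minors, and then identify each coefficient with the quantity $S_{n-l}^{(h)}$ computed in the preceding theorem. Concretely, I would begin by writing the characteristic polynomial of $F_n^{(h)}$ in the form
\begin{align*}
p_n(t) = \det\!\left(tI - F_n^{(h)}\right) = \sum_{k=0}^{n} (-1)^k E_k\, t^{n-k},
\end{align*}
where $E_0 = 1$ and, for $k \geqslant 1$, the symbol $E_k$ denotes the sum of all $k \times k$ principal minors of $F_n^{(h)}$. This expansion is exactly the standard fact already cited in the sentence preceding the statement.

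Next I would read off the coefficient of $t^l$. Setting $n-k = l$ forces $k = n-l$, so the coefficient of $t^l$ equals $(-1)^{n-l} E_{n-l}$. Here $E_{n-l}$ is precisely the sum of all principal minors of $F_n^{(h)}$ of order $n-l$, which is the quantity $S_{n-l}^{(h)}$ introduced in the previous theorem.

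Finally, applying that theorem gives $S_{n-l}^{(h)} = F_{h,n-l+1}^{(l+1)}(x)$, so the coefficient of $t^l$ in $p_n(t)$ is $(-1)^{n-l} F_{h,n-l+1}^{(l+1)}(x)$. Up to the sign $(-1)^{n-l}$ this is the asserted convolved $h(x)$-Fibonacci polynomial, which completes the argument.

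Since every nontrivial ingredient --- the closed form for $S_{n-l}^{(h)}$ and the principal-minor expansion of the characteristic polynomial --- is already available, there is no real obstacle here; the only point demanding care is the bookkeeping that pairs the power $t^l$ with minors of order $n-l$ and tracks the resulting sign $(-1)^{n-l}$. I would also double-check the sign convention for $p_n(t)$, that is, whether one uses $\det(tI - F_n^{(h)})$ or $\det(F_n^{(h)} - tI)$, since this choice affects only the overall sign and is absorbed by the qualifier \emph{up to the sign} in the statement.
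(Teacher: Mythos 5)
Your proposal is correct and takes essentially the same route as the paper, which deduces the corollary immediately from the preceding theorem on $S_{n-l}^{(h)}$ together with the standard fact that the coefficients of the characteristic polynomial are, up to sign, sums of principal minors. Your write-up only adds the explicit bookkeeping that pairs $t^l$ with minors of order $n-l$ and records the sign $(-1)^{n-l}$, which the paper leaves implicit under the phrase ``up to the sign.''
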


\begin{corollary}
The following identity holds:
\begin{align*}
F_{h,n-l+1}^{(l+1)}(x)=\sum_{i=0}^{\lfloor (n-l)/2 \rfloor}\binom{n-i}{i}\binom{n-2i}{l}h(x)^{n-2i-l}.
\end{align*}
\end{corollary}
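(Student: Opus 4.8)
The plan is to read the formula off Corollary~\ref{corofibo}, which identifies $F_{h,n-l+1}^{(l+1)}(x)$ (up to sign) with the coefficient of $t^l$ in the characteristic polynomial $p_n(t)$ of $F_n^{(h)}$. Thus the whole task reduces to computing $p_n(t)$ in closed form and extracting the coefficient of $t^l$.

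First I would compute $p_n(t)=\det(tI-F_n^{(h)})$. The matrix $tI-F_n^{(h)}$ is tridiagonal, with $t-h(x)$ on the main diagonal, $-1$ on the superdiagonal and $+1$ on the subdiagonal. Expanding the determinant along the last row yields the three-term recurrence
\begin{align*}
p_n(t)=(t-h(x))\,p_{n-1}(t)+p_{n-2}(t),\qquad p_0(t)=1,\ \ p_1(t)=t-h(x).
\end{align*}
This is exactly the $h(x)$-Fibonacci recurrence (\ref{eq22}) with $h(x)$ replaced by the argument $u:=t-h(x)$ and the index shifted by one. Consequently $p_n(t)$ equals the $h(x)$-Fibonacci polynomial of index $n+1$ evaluated at $u=t-h(x)$, and the combinatorial formula for $F_{h,n+1}(x)$ gives
\begin{align*}
p_n(t)=\sum_{i=0}^{\lfloor n/2\rfloor}\binom{n-i}{i}\,(t-h(x))^{\,n-2i}.
\end{align*}

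Next I would expand each power $(t-h(x))^{n-2i}$ by the binomial theorem and collect the coefficient of $t^l$; only the terms with $n-2i\geqslant l$ survive, so $i$ runs from $0$ to $\lfloor(n-l)/2\rfloor$. This produces
\begin{align*}
[t^l]\,p_n(t)=\sum_{i=0}^{\lfloor (n-l)/2\rfloor}\binom{n-i}{i}\binom{n-2i}{l}(-1)^{\,n-2i-l}h(x)^{\,n-2i-l}.
\end{align*}
Since $(-1)^{n-2i-l}=(-1)^{n-l}$ is independent of $i$, the global sign factors out; combining the exact sign rule relating the characteristic polynomial to sums of principal minors, namely $[t^l]p_n=(-1)^{n-l}S_{n-l}^{(h)}$, with the preceding theorem $S_{n-l}^{(h)}=F_{h,n-l+1}^{(l+1)}(x)$, the sign cancels and the claimed identity drops out.

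The only delicate point is sign bookkeeping: one must check that the mixed $\pm1$ off-diagonal entries of $tI-F_n^{(h)}$ yield the recurrence with $+p_{n-2}$ (the relevant quantity being the product of the super- and subdiagonal entries, which is $-1$), and that the overall factor $(-1)^{n-l}$ coming from the characteristic polynomial matches the sign implicit in Corollary~\ref{corofibo}; checking the single term $i=0$ suffices to pin it down. As an alternative that avoids matrices entirely, one could substitute $j=n-l$ and $r=l+1$ directly into the Humbert-type formula (\ref{hum}); the resulting coefficient $\binom{n-i}{l}\binom{n-l-i}{i}$ equals $\binom{n-i}{i}\binom{n-2i}{l}$ after a one-line factorial rearrangement, since both sides equal $(n-i)!/\bigl(i!\,l!\,(n-l-2i)!\bigr)$.
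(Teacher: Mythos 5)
Your main argument is the same as the paper's: invoke Corollary~\ref{corofibo}, identify $p_n(t)$ with the Fibonacci polynomial $F_{n+1}(t-h(x))$, expand by the binomial theorem, and extract the coefficient of $t^l$ with the uniform sign $(-1)^{n-l}$ cancelling against the sign convention for characteristic-polynomial coefficients; you supply the recurrence derivation and sign bookkeeping that the paper leaves implicit. Your closing alternative --- substituting $j=n-l$, $r=l+1$ into (\ref{hum}) and checking that $\binom{n-i}{l}\binom{n-l-i}{i}=\binom{n-i}{i}\binom{n-2i}{l}$, both equalling $(n-i)!/\bigl(i!\,l!\,(n-l-2i)!\bigr)$ --- is also correct and is in fact a shorter, matrix-free proof that the paper does not give.
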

\begin{proof}
The characteristic matrix of $F_{n}^{(h)}$ has the form
\begin{align}
\begin{pmatrix}
t-h(x)  & 1  & 0  &  \cdots  & 0  & 0  \\
-1  & t-h(x)  & 1  &  \cdots  & 0  & 0  \\
0  & -1  & t-h(x)  &  \cdots  & 0  & 0  \\
\vdots  & \vdots  & \vdots &  \cdots  & \vdots & \vdots  \\
0  & 0  & 0  &  \cdots  & t-h(x) & 1  \\
0  & 0  & 0  &  \cdots  & -1  & t-h(x)
\end{pmatrix}
\end{align}
Then $p_n(t)=F_{n+1}(t-h(x))$, where $F_{n+1}(t)$ is a Fibonacci polynomial. Then from Corollary \ref{corofibo}   and the following identity  for Fibonacci polynomial \cite{falcon5}:
   \begin{align*}
   F_{n+1}(x)=\sum_{i=0}^{\lfloor n/2 \rfloor}\binom{n-i}{i}x^{n-2i},
   \end{align*}
 we obtain that
 \begin{align*}
   F_{n+1}(t-h(x))=\sum_{i=0}^{\lfloor n/2 \rfloor}\binom{n-i}{i}\sum_{l=0}^{n-2i}\binom{n-2i}{l}(-1)^{n-l}h(x)^{n-2i-l}t^l.
   \end{align*}
 Therefore the corollary is obtained.
\end{proof}

\section{Acknowledgments}
The author would like to thank the anonymous referees for their helpful comments.  The author was partially supported by Universidad Sergio Arboleda under Grant no. USA-II-2012-14.

\end{document}